\newcommand{\real}{\mathbb{R}}
\newcommand{\rmm}{{\real}^{m \times m}}
\newcommand{\bfx}{x}
\newcommand{\bfv}{v}
\newcommand{\bfX}{\mathbf{X}}
\newcommand\textopenone{\leavevmode\hbox{\small 1\kern-3.3pt\normalsize 1}}
\newcommand{\norm}[1]{\left \lVert #1 \right \rVert}
\DeclareMathOperator{\E}{\mathbb{E}}
\DeclareMathOperator{\Prob}{\mathbb{P}}
\DeclareMathOperator{\sr}{\mathrm{sr}}
\DeclareMathOperator{\intdim}{\mathrm{intdim}}
\DeclareMathOperator{\trace}{\mathrm{trace}}
\DeclareMathOperator{\range}{\mathrm{range}}
\newtheorem{remark}[theorem]{Remark}
\title{A Probabilistic Subspace Bound\\ with Application to Active Subspaces\thanks{The second author was supported 
in part by NSF grant CCF-1145383. The second author also
acknowledges the support from the XDATA Program of the Defense Advanced
Research Projects Agency (DARPA), administered through Air Force
Research Laboratory contract FA8750-12-C-0323 FA8750-12-C-0323.  The third author was supported in part by 
the Air Force Office of Scientific Research (AROSR) grant FA9550-15-1-0299 and in part by the Consortium for Advanced Simulation of Light Water Reactors
({\tt http://www.casl.gov}), an Energy Innovation Hub ({\tt http://www.energy.gov/hubs}) for Modeling
and Simulation of Nuclear Reactors under U.S. Department of Energy Contract No.
DE-AC05-00OR22725. 
}
}
\author{
John T. Holodnak\thanks{Work conducted while a student at North Carolina State University, (\texttt{jtholodn@ncsu.edu})}
\and
Ilse C. F. Ipsen\thanks{%
Department of Mathematics, North Carolina State University, P.O. Box 8205,
Raleigh, NC 27695-8205, USA (\texttt{ipsen@ncsu.edu}, 
\texttt{http://www4.ncsu.edu/{\char'176}ipsen/})}
\and
Ralph C. Smith\thanks{%
Department of Mathematics, North Carolina State University, P.O. Box 8205,
Raleigh, NC 27695-8205, USA (\texttt{rsmith@ncsu.edu}, 
\texttt{http://www4.ncsu.edu/{\char'176}rsmith/})}
}
\begin{document}
\maketitle

\begin{abstract} 
Given a real symmetric positive semi-definite matrix $E$, and 
an approximation $S$ that is a sum of $n$ independent matrix-valued
random variables, we present bounds on the relative error in $S$ due to randomization. 
The bounds do not depend on the matrix dimensions but only on the numerical rank (intrinsic dimension) of $E$.
Our approach resembles the low-rank approximation of kernel matrices from random features, 
but our accuracy measures are more stringent.

In the context of parameter selection based on active subspaces, where  $S$ 
is computed via Monte Carlo sampling, we present a bound on the number of samples so that with high probability
the angle between the dominant subspaces of $E$ and $S$ is less than a user-specified tolerance.
This is a substantial improvement  over existing work, as it is
a non-asymptotic and fully explicit bound on the sampling amount~$n$,
and it allows the user to tune the success probability. It also suggests that Monte Carlo sampling can be efficient 
in the presence of many parameters, as long as the underlying
function $f$ is sufficiently smooth. 
\end{abstract}

\begin{keywords} 
positive semi-definite matrices, principal angles, eigenvalue decomposition, eigenvalue gaps,
matrix concentration inequality, intrinsic dimension, Monte Carlo sampling, active subspaces
\end{keywords}

\begin{AM} 
15A18, 15A23, 15A60, 15B10, 35J25, 60G60, 65N30, 65C06, 65C30, 65F15, 65D05
\end{AM}

\section{Introduction}
We analyse the accuracy of approximating a symmetric positive semi-definite matrix $E\in\rmm$
by a sum 
$\widehat{E}\equiv \tfrac{1}{n}\sum_{j=1}^n{z_jz_j^T}$ of $n$ independently sampled outer products $z_jz_j^T$,
each of which is an unbiased estimator of the mean $\E[z_jz_j^T]=E$, $1\leq j\leq n$, thus 
producing an overall unbiased estimator $\widehat{E}$.
We derive probabilistic bounds on the relative error due to randomization in $\widehat{E}$,
and on the angle between equi-dimensional dominant subspaces of $E$ and $\widehat{E}$. 
The bounds do not depend on the matrix dimension, but only on the numerical rank of $E$.

To avoid explicit dependence on the matrix dimensions, we use an intrinsic-dimension
matrix Bernstein concentration inequality. This type of analysis can be found
in low-rank approximations of kernel matrices via random features \cite{AKMMVZ17,LP14}.
However, our accuracy measures are more stringent, and the angle bounds necessitate 
assumptions that are strong enough to guarantee that the dominant subspaces are well-defined.
In contrast to existing probabilistic bounds for Krylov or subspace iterations,
see  \cite{DMKI16,HMT09} and the references therein,
our bounds are perturbation bounds and make no reference to actual methods for computing subspaces.

The motivation for this paper came from applications involving parameter selection,
such as it occurs in the solution of random ODEs and PDEs \cite{CDW2014,CWI2012,SW2014} and 
reduced-order nonlinear models \cite{BAHH2012}, as well as various applications arising in engineering, math biology, and sciences 
\cite{BNCIMN2014,CWHC2011,CDWI2011,CZC2014,NSO2015}. 

Given a function $f: \mathbb{R}^m \rightarrow \mathbb{R}$, which
depends on $m$ parameters and may be expensive to evaluate,
one wants to select subspaces associated with a few influential parameters. 
This is a form of dimension reduction \cite[Chapter 6]{Smith2014}, and one particular approach 
is to identify a low-dimensional  \textit{active subspace} in $\real^m$ 
along which $f$ is, on average, most sensitive to change \cite{CDW2014,Russi2010}. 
This is done by replacing a ``sensitivity" matrix $E\in\rmm$ by a sum of $n$ 
independent Monte Carlo samples $\widehat{E}$, followed by computing a dominant subspace of $\widehat{E}$.

Specifically,  \cite{Smith2014} illustrates that ODE models for HIV can easily have $m=20$ parameters whereas Boltzmann PDE models, quantifying neutron transport in a light water reactor, can have as many as  $m=10^6$.  In both cases, it is critical to isolate active subspaces of parameters -- which are identifiable in the sense that they are uniquely determined by observed responses --  prior to frequentist or Bayesian inference.  The matrices $E$ arise when computing local sensitivities $\nabla f(x)$ for determining these subspaces.

In the following, we present a probabilistic bound (Theorem~\ref{t_4}) that is tighter than existing work \cite{CG2014},
represents a non-asymptotic and fully explicit bound on the sampling amount~$n$,
and allows the user to tune the success probability. The absence of the matrix dimension $m$ in the bound
suggests that Monte Carlo sampling can be efficient in the presence of many parameters, 
as long as the function $f$ is sufficiently smooth.

\paragraph{Outline}
Bounds  are presented in Section~\ref{s_contrib}, with the proofs relegated to Section~\ref{s_proofs}.

\section{Our contributions}\label{s_contrib}
After stating the assumptions (Section~\ref{s_assum}), we present an upper bound for the relative error due to 
randomization in~$\widehat{E}$ (Section~\ref{s_er}),
a lower bound on the sampling amount~$n$ (Section~\ref{s_ras}), a bound on the angle between 
dominant subspaces of $E$ and $\widehat{E}$ (Section~\ref{s_sa}), and an application to
active subspaces (Section~\ref{s_as}).

\subsection{Assumptions}\label{s_assum}
Let the non-zero matrix $E\in\rmm$ be symmetric positive semi-definite,   
and $\widehat{E}\equiv \tfrac{1}{n}\sum_{j=1}^n{z_jz_j^T}$ be an approximation, where 
$z_j\in\real^m$ are $n$ independent random vectors with $\E[z_jz_j^T]=E$, $1\leq j\leq n$.
All quantities are uniformly bounded in the two norm, that is, there exists $L>0$ with 
$$\max_{1\leq j\leq n}{\|z_j\|_2}\leq L \qquad \text{and} \qquad \|E\|_2\leq L^2,$$ 
as well as 
$$\|\widehat{E}\|_2\leq \tfrac{1}{n}\,\sum_{j=1}^{n}{\|z_jz_j^T\|_2}\leq L^2.$$

Our bounds do not depend on the matrix dimension $m$, but only on the numerical rank of $E$,
which is quantified by
$$\intdim{(E)} \equiv \trace{(E)}/\|E\|_2\geq 1,$$
and equals the stable rank of $E^{1/2}$; see Section~\ref{s_matrixcon}.

\subsection{Error due to randomization}\label{s_er}
The first bound is an expression for the relative error of $\widehat{E}$ in the two norm.
\smallskip

\begin{theorem}\label{t_2} 
Given the assumptions in Section~\ref{s_assum},
for any $0<\delta<1$, with probability at least $1-\delta$, 
$$\frac{\|\widehat{E}-E\|_2}{\|E\|_2} \leq \gamma + \sqrt{\gamma(\gamma + 6)} \quad \text{where} \quad
\gamma \equiv \frac{1}{3n}\,\frac{L^2}{\|E\|_2}\, \ln\left( \frac{4}{\delta}\,\intdim{(E)} \right).$$
\end{theorem}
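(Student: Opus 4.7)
The plan is to apply the intrinsic-dimension matrix Bernstein inequality (Tropp) to the mean-zero summands $X_j \equiv \frac{1}{n}(z_jz_j^T - E)$, whose sum is exactly $\widehat{E}-E$, and then invert the resulting tail inequality in $t$ to identify the probabilistic bound on the relative error.

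First, I would establish the two ingredients needed by matrix Bernstein. For the almost-sure bound, since $z_jz_j^T$ and $E$ are both symmetric positive semi-definite with two-norm at most $L^2$, the eigenvalues of $z_jz_j^T - E$ lie in $[-L^2,L^2]$, so $\|X_j\|_2 \leq L^2/n$. For the matrix variance, a direct expansion gives
$$\E[X_j^2] = \tfrac{1}{n^2}\bigl(\E[\|z_j\|_2^2\, z_jz_j^T] - E^2\bigr) \preceq \tfrac{1}{n^2}\,L^2\, E,$$
where the semi-definite inequality uses $\|z_j\|_2^2 \leq L^2$ and drops the PSD term $E^2$. Summing over $j$, the matrix variance is dominated by $W \equiv \tfrac{L^2}{n}\,E$, which satisfies $\|W\|_2 = L^2\|E\|_2/n$ and $\intdim(W) = \intdim(E)$.

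Next, I would invoke the intrinsic-dimension matrix Bernstein inequality with the dominating matrix $W$: for suitable $t$,
$$\Prob\{\|\widehat{E}-E\|_2 \geq t\} \leq 4\,\intdim(E)\,\exp\!\left(\frac{-t^2/2}{L^2\|E\|_2/n + L^2 t/(3n)}\right).$$
Setting the right-hand side equal to $\delta$ yields a quadratic inequality in $t$, namely $t^2 - 2\ell b\, t - 2\ell a \geq 0$ with $\ell = \ln(4\,\intdim(E)/\delta)$, $b = L^2/(3n)$, and $a = L^2\|E\|_2/n$. Solving for the smallest admissible $t$ and dividing through by $\|E\|_2$, the two contributions collapse neatly in terms of $\gamma = \ell L^2/(3n\|E\|_2)$: the linear part becomes $\gamma$, while the discriminant becomes $\gamma^2 + 6\gamma$, giving exactly $\gamma + \sqrt{\gamma(\gamma+6)}$.

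The main obstacle is not any single computation but the bookkeeping: one must justify that the intrinsic-Bernstein inequality can be applied with the dominating matrix $W$ instead of the exact variance $V = \sum_j \E[X_j^2]$ (this relies on the monotonicity property of the matrix Bernstein bound under $V \preceq W$ in the PSD order, together with $\intdim$ monotonicity that needs verification in this setup), and that the admissibility condition on $t$ in the intrinsic version is either automatically met by the resulting bound or produces only a vacuous regime that can be absorbed harmlessly. Everything else is algebraic manipulation designed precisely so that the constants $3$, $4$, and $6$ in the theorem statement fall out of the quadratic formula.
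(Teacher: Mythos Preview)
Your proposal is correct and matches the paper's proof essentially step for step: define the zero-mean summands $\tfrac{1}{n}(z_jz_j^T-E)$, bound them by $L^2/n$ and their summed variance by $W=\tfrac{L^2}{n}E$, apply the intrinsic-dimension Bernstein inequality, and invert the tail bound via the quadratic in $t$. Your two flagged obstacles are both handled explicitly in the paper: the Bernstein inequality is stated there (Theorem~\ref{t_mins}) directly in terms of a dominating matrix $P\succeq \sum_j \E[X_j^2]$, so no separate monotonicity of $\intdim$ is needed, and the admissibility (``sufficient tolerance'') condition on $t$ is verified by noting $\ln\bigl(\tfrac{4}{\delta}\intdim(E)\bigr)\geq 1$, which forces $\gamma + \sqrt{\gamma(\gamma+6)} \geq \tfrac{\beta}{3\|E\|_2} + \tfrac{\|P\|_2^{1/2}}{\|E\|_2}$.
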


\begin{proof}
See Section~\ref{s_t2proof}
\end{proof}
\smallskip

Theorem~\ref{t_2} implies that the relative error in $\widehat{E}$ is small, if:
\begin{compactenum}
\item Many samples are used to compute $\widehat{E}$, that is $n\gg 1$.
\item $E$ has low numerical rank, that is, $\intdim(E)\ll m$.  
\item Tightness of the upper bound for $\|E\|_2$, that is, $L^2/\|E\|_2\approx 1$.

In the context of active subspaces in Theorem~\ref{t_4},  this is interpreted as the smoothness of an
underlying function.
\end{compactenum}

Section~\ref{s_t2expect} contains a brief discussion on alternative, expectation-based bounds. 

\subsection{Required amount of sampling}\label{s_ras}
We express Theorem~\ref{t_2} as a lower bound on the number of samples $n$ 
required for a user-specified relative error.
\smallskip

\begin{corollary}\label{c_2}
Given the assumptions in Section~\ref{s_assum}, let $0<\delta<1$ and $0<\epsilon<1$. If
$$n \geq \frac{8}{3\epsilon^2}\,\frac{L^2}{\|E\|_2} \ln\left( \frac{4}{\delta}\, \intdim{(E)}\right),$$
then with probability at least $1-\delta$,
$$\|\widehat{E}-E\|_2/\|E\|_2 \leq \epsilon.$$
\end{corollary}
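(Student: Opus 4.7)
The plan is to derive Corollary~\ref{c_2} as a direct consequence of Theorem~\ref{t_2} by inverting the functional relation between the relative error and the sampling amount. Theorem~\ref{t_2} bounds the relative error by $g(\gamma) \equiv \gamma + \sqrt{\gamma(\gamma+6)}$, where $\gamma$ is a monotonically decreasing function of $n$. So it suffices to choose $n$ large enough that $g(\gamma) \leq \epsilon$, and translate this into an explicit lower bound on $n$.

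First I would substitute the hypothesized lower bound $n \geq \frac{8}{3\epsilon^2}\frac{L^2}{\|E\|_2}\ln(4\,\intdim(E)/\delta)$ into the definition of $\gamma$ in Theorem~\ref{t_2}. The factors cancel cleanly, yielding $\gamma \leq \epsilon^2/8$. So the remaining task is purely algebraic: verify that $\gamma \leq \epsilon^2/8$ implies $g(\gamma) \leq \epsilon$ for $0<\epsilon<1$.

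For the algebra, since $g$ is increasing in $\gamma$, it suffices to check at $\gamma = \epsilon^2/8$. The inequality $\epsilon^2/8 + \sqrt{(\epsilon^2/8)(\epsilon^2/8 + 6)} \leq \epsilon$ is equivalent, after isolating the square root and squaring (valid because $\epsilon - \epsilon^2/8 > 0$ for $0<\epsilon<1$), to $3\epsilon^2/4 \leq \epsilon^2 - \epsilon^3/4$, i.e., $\epsilon^2(1-\epsilon)/4 \geq 0$, which holds on $(0,1)$. Applying Theorem~\ref{t_2} then delivers $\|\widehat{E}-E\|_2/\|E\|_2 \leq g(\gamma) \leq \epsilon$ with probability at least $1-\delta$.

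The only mild obstacle is getting the constant right: a cruder bound such as $\sqrt{\gamma(\gamma+6)} \leq \sqrt{7\gamma}$ (valid for $\gamma\leq 1$) would force a worse constant than $8/3$, so one must keep the dominant $6\gamma$ term under the radical and absorb the lower-order $\gamma^2$ term using $\epsilon < 1$. Beyond that, the corollary is a straightforward rearrangement of Theorem~\ref{t_2}.
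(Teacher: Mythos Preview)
Your proposal is correct and follows essentially the same route as the paper: both reduce the claim to showing that $\gamma\leq \epsilon^2/8$ forces $\gamma+\sqrt{\gamma(\gamma+6)}\leq \epsilon$, using $\epsilon<1$, and then translate $\gamma\leq \epsilon^2/8$ into the stated lower bound on~$n$. The paper writes the squared inequality as $2\gamma(3+\epsilon)\leq \epsilon^2$ and checks it at $\gamma=\epsilon^2/8$, which is exactly your computation $3\epsilon^2/4 \leq \epsilon^2 - \epsilon^3/4$ in equivalent form.
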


\begin{proof}
See Section~\ref{s_c2proof}
\end{proof}
\smallskip

Corollary~\ref{c_2} implies that only few samples are required to compute an
approximation~$\widehat{E}$ that is highly likely to have specified accuracy $\epsilon$, if:
\begin{compactenum}
\item The requested accuracy for $\widehat{E}$ is low, that is $\epsilon \approx 1$.
\item $E$ has low numerical rank, that is $\intdim(E)\ll m$.  
\item Tightness of the upper bound for $\|E\|_2$, that is, $L^2/\|E\|_2\approx 1$.
\end{compactenum}

\subsection{Subspace bound}\label{s_sa}
We bound the largest principal angle between equi-dimensional dominant subspaces of
of $E$ and $\widehat{E}$. 
To identify the subspaces, consider the eigenvalue decompositions
\begin{eqnarray}\label{e_ev1}
E = V\Lambda V^T, \qquad
\Lambda = \diag\begin{pmatrix}\lambda_1 & \cdots& \lambda_m\end{pmatrix}, \qquad
V=\begin{pmatrix}v_1 &\ldots & v_m\end{pmatrix},
\end{eqnarray}
where $\lambda_1 \geq \cdots \geq \lambda_m\geq 0$ and $V\in\rmm$ is an orthogonal matrix,
and
\begin{eqnarray}\label{e_ev2}
\widehat{E} = \widehat{V}\widehat{\Lambda}\widehat{V}^T, \qquad
\widehat{\Lambda}=\begin{pmatrix}\widehat{\lambda}_1 & \cdots & \widehat{\lambda}_m\end{pmatrix},\qquad 
\widehat{V}=\begin{pmatrix}\widehat{v}_1 &\cdots & \widehat{v}_m\end{pmatrix},
\end{eqnarray}
where  $\widehat{\lambda}_1 \geq \cdots \geq \widehat{\lambda}_m\geq 0$,
and $\widehat{V}\in \rmm$ is an orthogonal matrix.
 
The bound below on the largest principal angle $\angle(\widehat{\mathcal{S}},\mathcal{S})$ 
between dominant subspaces $\mathcal{S}$ and $\widehat{\mathcal{S}}$
requires the perturbation to be sufficiently small compared to the eigenvalue gap.
\smallskip

\begin{theorem}\label{t_3}
In addition to the assumptions in Section~\ref{s_assum}, 
let  $E$ have an eigenvalue gap $\lambda_{k} - \lambda_{k+1} > 0$  for some $1\leq k<m$ , 
so that
$\mathcal{S}\equiv \range\begin{pmatrix}v_1 & \cdots & v_k\end{pmatrix}$ is well-defined.
Also, let $0<\epsilon < \frac{\lambda_k - \lambda_{k+1}}{4\,\|E\|_2}$ and $0<\delta<1$. 

 If the sampling amount is sufficiently large,
$$n \geq \frac{8}{3\,\epsilon^2}\,\frac{L^2}{\|E\|_2}\, \ln\left( \frac{4}{\delta}\,\intdim{(E)} \right),$$
then with probability at least $1-\delta$, the dominant subspace 
$\widehat{\mathcal{S}}=\range\begin{pmatrix}\widehat{v}_1&\cdots & \widehat{v}_k\end{pmatrix}$ 
is well-defined,
and the largest principal angle $\angle(\widehat{\mathcal{S}},\mathcal{S})$ is bounded by
$$\sin{\angle(\widehat{\mathcal{S}},\mathcal{S})}\leq 
\frac{4\,\|E\|_2}{\lambda_k - \lambda_{k+1}}\, \epsilon.$$
\end{theorem}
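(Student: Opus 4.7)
\textbf{Proof sketch for Theorem~\ref{t_3}.}
The plan is to combine the error bound from Corollary~\ref{c_2} with Weyl's inequality (to guarantee that the perturbed subspace is well-defined) and a Davis–Kahan $\sin\Theta$ theorem (to control the angle).

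First, I would apply Corollary~\ref{c_2}: under the stated sampling amount, with probability at least $1-\delta$ we have $\|\widehat{E}-E\|_2 \le \epsilon\,\|E\|_2$. Condition on this event for the remainder of the argument. By hypothesis, $\epsilon\,\|E\|_2 < (\lambda_k - \lambda_{k+1})/4$, so the perturbation is strictly smaller than a quarter of the gap.

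Next, I would invoke Weyl's inequality for symmetric matrices, which gives $|\widehat{\lambda}_i - \lambda_i| \le \|\widehat{E}-E\|_2 \le \epsilon\,\|E\|_2$ for $1 \le i \le m$. In particular,
\begin{equation*}
\widehat{\lambda}_k - \widehat{\lambda}_{k+1}
\;\ge\; (\lambda_k - \lambda_{k+1}) - 2\,\epsilon\,\|E\|_2
\;>\; \tfrac{1}{2}\,(\lambda_k - \lambda_{k+1}) \;>\; 0,
\end{equation*}
so $\widehat{E}$ has a positive eigenvalue gap at index $k$ and $\widehat{\mathcal{S}}=\range(\widehat{v}_1,\dots,\widehat{v}_k)$ is well-defined. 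A similar calculation yields $\lambda_k - \widehat{\lambda}_{k+1} \ge \tfrac{3}{4}(\lambda_k-\lambda_{k+1}) > 0$, which is precisely the separation needed to invoke a Davis–Kahan-type bound.

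Finally, I would apply the Davis–Kahan $\sin\Theta$ theorem to the pair $(E,\widehat{E})$ with the spectral splitting at index $k$, using the separation $\lambda_k - \widehat{\lambda}_{k+1}$ (or, equivalently after Weyl, a constant multiple of the original gap $\lambda_k-\lambda_{k+1}$). This yields a bound of the form
\begin{equation*}
\sin\angle(\widehat{\mathcal{S}},\mathcal{S})
\;\le\; \frac{\|\widehat{E}-E\|_2}{\lambda_k - \widehat{\lambda}_{k+1}}
\;\le\; \frac{\epsilon\,\|E\|_2}{\tfrac{3}{4}(\lambda_k-\lambda_{k+1})}
\;\le\; \frac{4\,\|E\|_2}{\lambda_k-\lambda_{k+1}}\,\epsilon,
\end{equation*}
the last inequality absorbing the factor $4/3$ into the constant $4$ for a cleaner statement. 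The main obstacle is really just the bookkeeping of which form of Davis–Kahan to invoke: one must ensure that the assumption $\epsilon < (\lambda_k-\lambda_{k+1})/(4\|E\|_2)$ is strong enough to keep a spectral gap open after perturbation, so that the denominator in the $\sin\Theta$ bound is comparable to the unperturbed gap $\lambda_k-\lambda_{k+1}$. Everything else is a direct chaining of Corollary~\ref{c_2}, Weyl, and Davis–Kahan.
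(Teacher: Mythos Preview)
Your proposal is correct and follows essentially the same architecture as the paper: invoke Corollary~\ref{c_2} to get $\|\widehat{E}-E\|_2\le\epsilon\|E\|_2$ with probability $\ge 1-\delta$, use an eigenvalue perturbation inequality to show the gap at index~$k$ survives, and then apply a classical subspace perturbation bound.

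The only difference is the choice of subspace perturbation tool. The paper bundles the deterministic part into a separate result (Theorem~\ref{t_1}) built on Stewart's bound (Lemma~\ref{l_1}), which controls the projector difference via the off-diagonal block $F_{12}$ of $V^T(\widehat{E}-E)V$ and the quantity $\eta=\mathrm{gap}-\|F_{11}\|_2-\|F_{22}\|_2\ge\tfrac{1}{2}\mathrm{gap}$; this yields the constant~$4$ directly. You instead invoke Davis--Kahan with the mixed separation $\lambda_k-\widehat{\lambda}_{k+1}\ge\tfrac{3}{4}(\lambda_k-\lambda_{k+1})$, which actually gives $4/3$ before you relax to~$4$. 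Both routes are standard and equivalent here; your version is marginally sharper, while the paper's packaging into Theorem~\ref{t_1} makes the deterministic step reusable and keeps the interface with the probabilistic part clean.
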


\begin{proof}
See Section~\ref{s_t3proof}.
\end{proof}
\smallskip

Theorem~\ref{t_3} implies that $\widehat{\mathcal{S}}$ is likely to be $\epsilon$-close to $\mathcal{S}$, if:
\begin{compactenum}
\item $E$ has a large eigenvalue gap $(\lambda_k-\lambda_{k+1})/\lambda_1$.
This is a relative gap, but it is weak because the denominator contains $\lambda_1$ rather than $\lambda_k$.
The inverse of the  gap is a measure of sensitivity for the subspace~$\mathcal{S}$. 
\item The matrix $E$ has low numerical rank,  that is $\intdim(E)\ll m$.
\item Tightness of the upper bound for $\|E\|_2$, that is, $L^2/\|E\|_2\approx 1$.
\end{compactenum}

\subsection{Application to active subspaces}\label{s_as}
After setting the context (Section~\ref{s_setting}) we improve an existing bound 
on the number of Monte Carlo samples required  to approximate an active subspace to
a user-specified error subject to a user-specified success probability (Section~\ref{s_improv}).

\subsubsection{Problem Setting}\label{s_setting}
Assume the non-constant
function $f(\bfx): \real^m \rightarrow \real$ is continuously differentiable, with gradient vector 
$$\nabla f(\bfx) =\begin{pmatrix} \frac{\partial f}{\partial \bfx_1}(x) & \cdots & 
\frac{\partial f}{\partial \bfx_m}(x)\end{pmatrix}^T\in \real^m$$ 
and Lipschitz constant $L > 0$ so that $\|\nabla f(\bfx)\|_2 \leq L$ for all $\bfx \in \real^m$.
Assume also that $f$ is square integrable on $\real^m$ with respect to a positive and bounded probability density 
function $\rho(\bfx)$; and that all products of partial derivatives of $f$ are also integrable with respect to $\rho(\bfx)$.

Let $\bfX\in\real^m$ be a random vector with the associated probability density function $\rho(\bfx)$, and denote by 
$$\E{[h(\bfX)]} \equiv \int_{\real^m}{h(\bfx)\,\rho(\bfx)\, d\bfx}$$
the expected value of a function $h(\bfX):\real^m\rightarrow \real$ with regard to $\bfX$.
Then the sensitivity of $f$ along a unit-norm direction $\bfv$ can be estimated from  
the expected value of the squared directional derivative of $f$ along~$\bfv$, 
\begin{eqnarray*}
\E{[\left(\bfv^T\,\nabla f(\bfX)\right)^2]} = \int_{\real^m}{\left(\bfv^T\,\nabla f(\bfx) \right)^2\rho(\bfx)\, d\bfx}.
\end{eqnarray*}
Directional derivatives \cite{AMS2008} can measure sensitivity in any direction, while
mean squared derivatives \cite{Sobol2009} are limited to coordinate directions. 
Informative directional derivatives can be obtained  from the $m\times m$ matrix \cite[Lemma 2.1]{CDW2014}
\begin{eqnarray}\label{e_E}
E \equiv \E\left[\nabla f(\bfX) \left( \nabla f(\bfX)\right)^T \right]=
\int_{\real^m}{ \nabla f(\bfx) \left( \nabla f(\bfx)\right)^T \rho(\bfx)\,d\bfx}
\end{eqnarray}
and its eigenvalue decomposition (\ref{e_ev1}). For an eigenpair $(\lambda_j, v_j)$, with  $Ev_j=\lambda_j v_j$,
we have $\lambda_j=\E{[(\bfv_j^T\,\nabla f(\bfX))^2]}$.
This means that eigenvector~$v_j$ indicates a direction of sensitivity for $f$, 
while $\lambda_j$ represents the average amount of sensitivity of~$f$ along~$v_j$. 
In particular, eigenvectors associated with the dominant eigenvalues represent directions along 
which $f$, on average, is the most sensitive. 

This leads to the concept  of active subspace \cite{CDW2014,erratum,CG2014}.
We assume that  for some $1\leq k<m$, the matrix $E$ has an eigenvalue gap $\lambda_k > \lambda_{k+1}$.
Then the dominant subspace
$\mathcal{S}=\range\begin{pmatrix}v_1 & \cdots &v_k\end{pmatrix}$ 
is well-defined and called the \textit{active subspace of dimension~$k$ of~$f$}.

Since explicit computation of $E$ in (\ref{e_E}) is often not feasible because  
the elements are high-dimensional integrals, one can use a Monte Carlo method \cite[(2.16)]{CDW2014} 
to independently sample $n$ vectors $\bfx_j \in \real^m$ according to $\rho(\bfx)$, 
and approximate $E$ by 
$$\widehat{E} = \frac{1}{n} \sum_{j=1}^{n}{\nabla f(\bfx_j) \left( \nabla f(\bfx_j)\right)^T}.$$
If  $\widehat{E}$ happens to have an eigenvalue gap at the same location as $E$, so that 
$\widehat{\lambda}_k > \widehat{\lambda}_{k+1}$, then the perturbed dominant subspace 
$\widehat{\mathcal{S}}=\range{\begin{pmatrix}\widehat{v}_1 & \cdots & \widehat{v}_k\end{pmatrix}}$ 
is also well-defined and called \textit{approximate active subspace} of dimension~$k$ for~$f$.

\subsubsection{Accuracy of approximate active subspace}\label{s_improv}
The bounds below are conceptual as they depend on unknown quantities like the eigenvalues of~$E$
and a global bound on the gradient norm. Nevertheless, a sufficiently tight bound is informative because it
suggests that Monte Carlo sampling can be efficient in the presence of many parameters, as long as 
the function $f$ is sufficiently smooth with most eigenvalues of $E$ being small. 

Below we tried to collect all the required assumptions for \cite[Corollary 3.7]{CG2014}.
\smallskip
 
\begin{theorem}[Theorem 3.5, Corollaries 3.6 and  3.7 in \cite{CG2014}]\label{t_5}
In addition to the assumptions in Section~\ref{s_setting}, also assume that 
$0<\epsilon\leq \min\left\{1,\tfrac{\lambda_k-\lambda_{k+1}}{5\,\lambda_1}\right\}$ and
$$\nu\equiv \left\|\int{\left((\nabla f(x) (\nabla f(x))^T-E\right)^2\rho(x)\, dx}\right\|>0.$$
If the number of samples  is 
$$n=\Omega\left(\max\left\{\frac{L^2}{\lambda_1\,\epsilon},\, 
\frac{\nu}{\lambda_1^2\,\epsilon^2}\right\}\> \ln( 2\, m)\right),$$
then with high probability 
$$\sin{\angle(\widehat{\mathcal{S}},\mathcal{S})}\leq
\frac{4\,\lambda_1\,\epsilon}{\lambda_k - \lambda_{k+1}}.$$
\end{theorem}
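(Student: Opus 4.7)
The plan is to combine the standard (dimension-dependent) matrix Bernstein inequality applied to $\widehat{E}-E$ with the Davis--Kahan $\sin\theta$ theorem for the decompositions (\ref{e_ev1}) and (\ref{e_ev2}). This is essentially the same two-step strategy used for Theorem~\ref{t_3}, except with $\ln(2m)$ in place of $\ln(\intdim(E)/\delta)$, and with the sharper matrix variance $\nu$ in place of the cruder uniform surrogate $L^2\|E\|_2$.

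For the first step, I would write $\widehat{E}-E=\sum_{j=1}^n X_j$ with centered independent summands $X_j\equiv\tfrac{1}{n}\bigl(\nabla f(x_j)(\nabla f(x_j))^T-E\bigr)$. The uniform bound $\|\nabla f\|_2\leq L$ together with $\|E\|_2\leq L^2$ gives $\|X_j\|_2\leq 2L^2/n$, while by definition of $\nu$ and independence, $\bigl\|\sum_{j=1}^n\E[X_j^2]\bigr\|_2=\nu/n$. The standard matrix Bernstein tail (carrying the $2m$ dimension prefactor) with target deviation $t=\lambda_1\epsilon$ then forces $\|\widehat{E}-E\|_2\leq \lambda_1\epsilon$ with high probability as soon as
$$n \;=\; \Omega\!\left(\max\!\left\{\frac{L^2}{\lambda_1\epsilon},\ \frac{\nu}{\lambda_1^2\epsilon^2}\right\}\,\ln(2m)\right),$$
where the two terms in the maximum are precisely the subexponential and subgaussian regimes of Bernstein evaluated at $t=\lambda_1\epsilon$.

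For the second step, the hypothesis $\epsilon\leq (\lambda_k-\lambda_{k+1})/(5\lambda_1)$ forces $\|\widehat{E}-E\|_2\leq (\lambda_k-\lambda_{k+1})/5$, so Weyl's inequalities keep the $k$-th gap open in $\widehat{E}$ and the perturbed subspace $\widehat{\mathcal{S}}=\range(\widehat{v}_1,\ldots,\widehat{v}_k)$ is well defined. The Davis--Kahan $\sin\theta$ theorem, applied at the gap between positions $k$ and $k+1$, then yields
$$\sin\angle(\widehat{\mathcal{S}},\mathcal{S})\;\leq\;\frac{\|\widehat{E}-E\|_2}{(\lambda_k-\lambda_{k+1})-\|\widehat{E}-E\|_2}\;\leq\;\frac{\lambda_1\epsilon}{(4/5)(\lambda_k-\lambda_{k+1})},$$
which absorbs into the stated constant $4\lambda_1\epsilon/(\lambda_k-\lambda_{k+1})$. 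The main obstacle is the constant-tracking inside Bernstein so that both regimes in the maximum emerge cleanly with the intended dependence on the (implicit) success probability and on $\ln(2m)$; the ambient-dimension factor $\ln(2m)$ is an artifact of the ordinary Bernstein tail, and eliminating it in favour of $\ln(\intdim(E)/\delta)$ is exactly the improvement driving our Theorem~\ref{t_3}.
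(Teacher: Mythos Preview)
The paper does not supply its own proof of Theorem~\ref{t_5}: the statement is quoted verbatim from \cite{CG2014} (as the header ``Theorem 3.5, Corollaries 3.6 and 3.7 in \cite{CG2014}'' indicates) and serves only as a baseline against which the authors' own Theorem~\ref{t_4} is compared. There is therefore no in-paper argument to match your proposal against.

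That said, your sketch is a faithful reconstruction of how such a bound is obtained in \cite{CG2014}, and it is essentially the same two-step template the present paper uses for Theorem~\ref{t_3}: a matrix Bernstein inequality to control $\|\widehat{E}-E\|_2$, followed by a Davis--Kahan/Stewart perturbation bound at the $k$th gap. Your identification of the two Bernstein regimes with the two terms in the maximum is correct, and your handling of the gap via $\epsilon\le(\lambda_k-\lambda_{k+1})/(5\lambda_1)$ is sound (indeed it yields the constant $5/4$, comfortably inside the stated~$4$). Two minor remarks: the boundedness constant can be taken as $L^2/n$ rather than $2L^2/n$ since both $\nabla f\nabla f^T$ and $E$ are positive semi-definite (cf.\ the paper's own argument in (\ref{e_p1})); and the phrase ``with high probability'' in the statement hides the dependence on the failure probability, which is precisely one of the deficiencies the authors highlight when motivating Theorem~\ref{t_4}.
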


We  improve on Theorem~\ref{t_5} by presenting a bound that is tighter and more informative. More specifically,
Theorem~\ref{t_4}
(i) specifies a non-asymptotic, fully explicit, computable bound for the sampling amount~$n$;
(ii) specifies an explicit expression for the failure probability~$\delta$ and its 
impact on the sampling amount~$n$, thus allowing tuning by the user;
(iii) depends on the numerical rank of $E$, which can be much smaller than the total number $m$ of parameters; and
(iv) guarantees that the approximate active subspace $\widehat{\mathcal{S}}$ is well-defined.
\smallskip

\begin{theorem}\label{t_4}
With the assumptions in Section~\ref{s_setting}, let
$0<\epsilon < \tfrac{\lambda_k - \lambda_{k+1}}{4\,\|E\|_2}$ and $0<\delta<1$. 
If the number of Monte Carlo samples is at least
$$n \geq \frac{8}{3\,\epsilon^2}\,\frac{L^2}{\|E\|_2}\, \ln\left( \frac{4}{\delta}\,\intdim{(E)} \right),$$
then with probability at least $1-\delta$,
the approximate active subspace $\widehat{\mathcal{S}}$ is well-defined, 
and the largest principal angle $\angle(\widehat{\mathcal{S}},\mathcal{S})$ is bounded by
$$\sin{\angle(\widehat{\mathcal{S}},\mathcal{S})}\leq 
\frac{4\,\|E\|_2}{\lambda_k - \lambda_{k+1}}\, \epsilon.$$
\end{theorem}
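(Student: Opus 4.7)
The plan is to recognize Theorem~\ref{t_4} as essentially a specialization of Theorem~\ref{t_3} to the active subspace setting, so the bulk of the work is verifying that the active subspace framework in Section~\ref{s_setting} satisfies the abstract assumptions in Section~\ref{s_assum}. Once the match is made, Theorem~\ref{t_3} fires directly and delivers both the well-definedness of~$\widehat{\mathcal{S}}$ and the sine bound verbatim.

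First I would make the identification: set $z_j \equiv \nabla f(\bfx_j)$ for $\bfx_j$ drawn independently from $\rho$, so that $\widehat{E} = \tfrac{1}{n}\sum_{j=1}^{n} z_j z_j^T$ matches the estimator from Section~\ref{s_assum}. Unbiasedness $\E[z_j z_j^T] = E$ follows immediately from~(\ref{e_E}) and the choice of sampling density. The matrix $E$ is symmetric positive semi-definite as an average of rank-one positive semi-definite outer products. The uniform bound $\|z_j\|_2 = \|\nabla f(\bfx_j)\|_2 \leq L$ is the Lipschitz assumption in Section~\ref{s_setting}. For $\|E\|_2 \leq L^2$, I would note that for every unit vector $\bfv$,
\begin{equation*}
\bfv^T E \bfv = \E\bigl[(\bfv^T \nabla f(\bfX))^2\bigr] \leq \E\bigl[\|\nabla f(\bfX)\|_2^2\bigr] \leq L^2,
\end{equation*}
by Cauchy--Schwarz and the Lipschitz bound, so taking the supremum yields the desired $\|E\|_2 \leq L^2$. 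The bound $\|\widehat{E}\|_2 \leq L^2$ then follows from the triangle inequality on its outer-product expansion, exactly as in Section~\ref{s_assum}.

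With all assumptions of Section~\ref{s_assum} verified, and the eigenvalue gap $\lambda_k - \lambda_{k+1} > 0$ guaranteed by the definition of an active subspace in Section~\ref{s_setting}, the hypotheses $0<\epsilon < \tfrac{\lambda_k - \lambda_{k+1}}{4\|E\|_2}$ and $0<\delta<1$ and the sampling bound on $n$ are precisely those required by Theorem~\ref{t_3}. Applying Theorem~\ref{t_3} gives that with probability at least $1-\delta$ the perturbed dominant subspace $\widehat{\mathcal{S}}=\range\begin{pmatrix}\widehat{v}_1&\cdots&\widehat{v}_k\end{pmatrix}$ is well-defined (which is exactly the statement that $\widehat{\lambda}_k > \widehat{\lambda}_{k+1}$, so $\widehat{\mathcal{S}}$ qualifies as an approximate active subspace) and that $\sin\angle(\widehat{\mathcal{S}},\mathcal{S}) \leq \tfrac{4\|E\|_2}{\lambda_k - \lambda_{k+1}}\,\epsilon$.

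There is really no hard step in this proof: the substance lies entirely in Theorem~\ref{t_3}, and the only point requiring a brief argument is the bound $\|E\|_2 \leq L^2$, since $E$ is defined implicitly by an integral rather than as a finite sum. The rest is a translation exercise between the abstract and applied notations.
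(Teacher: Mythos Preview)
Your proposal is correct and matches the paper's approach: the paper likewise reduces Theorem~\ref{t_4} to a direct application of Theorem~\ref{t_3}, with the only substantive check being $\|E\|_2\leq L^2$ (stated as Lemma~\ref{l_ebound}). The paper proves that bound by pushing the norm inside the integral, whereas you use the Rayleigh-quotient form with Cauchy--Schwarz; both are equally valid and equally short.
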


\begin{proof}
See Section~\ref{s_t4proof}.
\end{proof}
\smallskip

Theorem~\ref{t_4} implies that $\widehat{\mathcal{S}}$ is likely to be an $\epsilon$-accurate  
approximation to the active subspace $\mathcal{S}$, if:
\begin{compactenum}
\item $E$ has a large relative eigenvalue gap $(\lambda_k-\lambda_{k+1})/\lambda_1$.
\item The matrix $E$ has low numerical rank,  that is $\intdim(E)\ll m$.
\item The function $f$ is smooth, in the sense that $L^2/\|E\|_2\approx 1$, see Section~\ref{s_t4proof}.
\end{compactenum}
\smallskip

\begin{remark}
Monte Carlo sampling of $x_j$, according to $\rho(x)$, does not necessarily produce gradients
$\nabla f(\bfx_j) \left( \nabla f(\bfx_j)\right)^T$ that concentrate tightly around the mean $E$.
This could be remedied with some form of  importance sampling.

For instance, in order to speed up the sampling of Fourier features for kernel ridge regression, 
\cite[Section 4]{AKMMVZ17} propose to sample according to the leverage function of the kernel.
However, it is not clear how this can be implemented efficiently in practice; plus the required sampling
amount appears to exhibit a much stronger dependence on the
problem dimension than is acceptable in our context. 
\end{remark}

\section{Proofs}\label{s_proofs}
We present all the materials required for the proofs of Theorem~\ref{t_2} (Section~\ref{s_t2proof}),
Corollary~\ref{c_2} (Section~\ref{s_c2proof}), Theorem~\ref{t_3} (Section~\ref{s_t3proof}),
and Theorem~\ref{t_4} (Section~\ref{s_t4proof}).

\subsection{Everything for the proof of Theorem~\ref{t_2}}\label{s_t2proof}
The idea is to view $\widehat{E}$ as a sum of random variables.
To this end we review a matrix Bernstein concentration inequality and the definition of 
intrinsic dimension (Section~\ref{s_matrixcon}), and then apply the concentration inequality 
to prove Theorem~\ref{t_2} (Section~\ref{s_t2p}), followed by a brief discussion of expectation-based
bounds (Section~\ref{s_t2expect}).

\subsubsection{Matrix Bernstein concentration inequality, and intrinsic dimension}\label{s_matrixcon}
Concentration inequalities bound the deviation of a sum of random variables from the mean.

Here, the random variables are matrix-valued and bounded; and
have zero mean, and bounded ``variance"  in the sense of the
 L\"owner partial order\footnote{If $P_1$ and $P_2$ are real symmetric matrices, then 
$P_1\preceq P_2$ means that $P_2-P_1$ is positive semi-definite \cite[Definition 7.7.1]{HJ2013}.}.
We use a matrix Bernstein concentration inequality with intrinsic dimension  \cite[Section 7.2]{Tropp2015}
in the context of a random sampling model \cite[page 83]{Tropp2015}. 
The \textit{intrinsic dimension} quantifies the numerical rank of a symmetric positive semi-definite matrix, and 
is instrumental in avoiding an explicit dependence on the matrix dimension.

\begin{definition}[Section 2.1 in \cite{Mins17}, Definition 7.1.1 in \cite{Tropp2015}]\label{d_intdim}
The {\rm intrinsic dimension} or {\rm effective rank} of a non-zero, symmetric positive semi-definite 
matrix $P\in\rmm$ is  $\intdim{(P)} \equiv \trace{(P)}/\|P\|_2$.
\end{definition}

The symmetric positive semi-definiteness of $P$ implies that its \textit{intrinsic dimension} is bounded by the rank
$$1\leq \intdim{(P)} \leq \rank{(P)}\leq m,$$
and is equal to the \textit{stable rank} of a square root \cite[Section 6.5.4]{Tropp2015}
\begin{eqnarray*}
\intdim(P) & = &\frac{\lambda_1(P)+\cdots +\lambda_m(P)}{\lambda_1(P)}=
\frac{\sigma_1(P^{1/2})^2+\cdots+\sigma_m(P^{1/2})^2}{\sigma_1(P^{1/2})}\nonumber\\
&=&\left(  \frac{\|P^{1/2}\|_F}{\|P^{1/2}\|_2}\right)^2=\sr{(P^{1/2})}.
\end{eqnarray*}
\smallskip

\begin{theorem}[\cite{MIns11,Mins17} and Theorem 7.3.1 in \cite{Tropp2015}]\label{t_mins}
If
\begin{compactenum}
\item (Independence) $\ X_j$ are $n$ independent real symmetric random matrices,
\item (Boundedness) $\ \max_{1\leq j\leq n}{\|X_j\|_2}\leq \beta$ for some $\beta>0$,
\item (Zero mean)  $\  \E{[X_j]}=0$, $1\leq j\leq n$,
\item  (Bounded matrix variance) $\ \sum_{j=1}^{n}{\E{[X_j^2]}}\preceq P$ for some~$P$,
\item (Sufficient  tolerance) $\epsilon \geq \norm{P}_2^{1/2} + \beta/3$, 
\end{compactenum}
then
$$\Prob{\left[\left\|\sum_{j=1}^{n}{X_j}\right\|_2\geq \epsilon \right]} \leq 
4 \>\intdim{(P)}\>\exp\left( \frac{-\epsilon^2/2}{\norm{P}_2 + \beta\epsilon/3} \right).$$
\end{theorem}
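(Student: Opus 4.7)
The plan is to follow the matrix Laplace transform method, using Lieb's concavity theorem to pass from the scalar moment-generating function to a matrix mgf, and then replace the usual ambient-dimension factor by $\intdim(P)$ via a carefully chosen test function. Throughout, let $Y \equiv \sum_{j=1}^n X_j$. Because $\|Y\|_2 = \max\{\lambda_{\max}(Y), \lambda_{\max}(-Y)\}$ and the hypotheses are symmetric under $X_j \mapsto -X_j$, a union bound reduces matters to showing, for $t \equiv \epsilon$,
$$\Prob[\lambda_{\max}(Y) \geq t] \leq 2\,\intdim(P)\,\exp\!\left(\frac{-t^2/2}{\|P\|_2 + \beta t/3}\right),$$
doubling to the factor $4$ in the stated conclusion.

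First I would obtain the scalar-to-matrix Bernstein mgf estimate: for $0 < \theta < 3/\beta$, the inequality $e^x \leq 1 + x + \tfrac{x^2/2}{1-\beta/3 \cdot \theta/\theta} \cdot \theta^{-2}$ transfers via the standard spectral calculus to
$$\log \E[e^{\theta X_j}] \preceq g(\theta)\,\E[X_j^2], \qquad g(\theta) \equiv \frac{\theta^2/2}{1 - \theta\beta/3}.$$
Combining this with the Lieb-based subadditivity of matrix cumulants \cite[Chapter 3]{Tropp2015} gives
$$\E[\trace e^{\theta Y}] \leq \trace \exp\!\Bigl(\sum_{j=1}^n \log \E[e^{\theta X_j}]\Bigr) \leq \trace \exp(g(\theta) P),$$
where the last step uses assumption (4) and operator monotonicity of the trace exponential with respect to the L\"owner order.

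The crucial intrinsic-dimension refinement is to avoid the crude bound $\trace e^{\theta Y} \leq m\,e^{\theta \lambda_{\max}(Y)}$, which reintroduces the dimension. Instead I would apply the Markov-style inequality with the nonnegative, nondecreasing test function $\psi_\theta(y) = e^{\theta y} - 1$ (or a smooth monotone surrogate vanishing on $y \leq 0$), so that $\psi_\theta(\lambda_{\max}(Y)) \leq \trace\bigl(e^{\theta Y} - I\bigr)$ whenever $Y \succeq 0$, and the negative part is handled separately. The key inequality on the PSD matrix $M \equiv g(\theta) P$ is
$$\trace(e^M - I) = \sum_i (e^{\mu_i} - 1) \leq \frac{\trace(M)}{\lambda_{\max}(M)}\bigl(e^{\lambda_{\max}(M)} - 1\bigr) = \intdim(P)\bigl(e^{g(\theta)\|P\|_2} - 1\bigr),$$
obtained from the fact that $(e^x-1)/x$ is increasing for $x > 0$ together with $\intdim(g(\theta)P) = \intdim(P)$. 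Dividing by $\psi_\theta(t) = e^{\theta t} - 1$ and optimizing in $\theta$ at $\theta^* = t/(\|P\|_2 + \beta t/3)$, which is admissible precisely when $t \geq \|P\|_2^{1/2} + \beta/3$ (assumption (5))—this is the role of the ``sufficient tolerance'' hypothesis—yields the Bernstein exponent and the $\intdim(P)$ prefactor.

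The main obstacle I expect is the third step: the polished handling of the $\psi_\theta$ Markov argument so that neither the subtraction of $-1$ nor the restriction to $y\geq 0$ spoils the bound, and so that the constant in the prefactor comes out as $2$ per tail (hence $4$ overall) rather than something dimension-dependent. The condition $\epsilon \geq \|P\|_2^{1/2} + \beta/3$ is not decorative: it guarantees that the ratio $(e^{g(\theta^*)\|P\|_2}-1)/(e^{\theta^* t}-1)$ is controlled by $e^{g(\theta^*)\|P\|_2 - \theta^* t}$ with a harmless constant, and this is precisely where the stated constants come from. Once that is handled, the rest is the standard Bernstein optimization and a two-sided union bound.
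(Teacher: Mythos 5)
You should first note that the paper does not prove Theorem~\ref{t_mins} at all: it is imported verbatim (specialized to real symmetric matrices) from Minsker and from Theorem~7.3.1 of Tropp's monograph, so the only meaningful benchmark is the source's proof. Your outline does reproduce the architecture of that proof --- the Bernstein matrix mgf bound, Lieb-based subadditivity giving $\E[\trace e^{\theta Y}]\leq \trace\exp(g(\theta)P)$, the intrinsic-dimension trace inequality $\trace\varphi(M)\leq \intdim(M)\,\varphi(\lambda_{\max}(M))$ for convex $\varphi$ with $\varphi(0)=0$ and $M\succeq 0$, and the optimization at $\theta^*=t/(\norm{P}_2+\beta t/3)$, with hypothesis (5) serving exactly the role you assign it, namely forcing $\theta^* t\geq 1$ so that the ``$-1$'' in the denominator costs only a constant. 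The displayed scalar inequality for $e^x$ is garbled as written ($1-\beta/3\cdot\theta/\theta$ does not parse into anything meaningful), but the intended statement, $\log\E[e^{\theta X_j}]\preceq g(\theta)\E[X_j^2]$ with $g(\theta)=\tfrac{\theta^2/2}{1-\theta\beta/3}$ for $0<\theta<3/\beta$, is standard and correct.

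The genuine gap is the step you yourself flag as the main obstacle, and it is not merely a polish issue. With $\psi_\theta(y)=e^{\theta y}-1$, the Markov-type inequality $\Prob[\lambda_{\max}(Y)\geq t]\leq \psi_\theta(t)^{-1}\E[\trace(e^{\theta Y}-I)]$ requires the pointwise comparison $\psi_\theta(t)\leq \sum_i\bigl(e^{\theta\lambda_i(Y)}-1\bigr)$ on the event $\{\lambda_{\max}(Y)\geq t\}$, and this fails whenever $Y$ has negative eigenvalues, since those summands are negative; conditioning on $Y\succeq 0$ does not repair this because the complementary event still carries probability that must be accounted for, and replacing $\trace(e^{\theta Y}-I)$ by $\trace(e^{\theta Y})-1$ reintroduces an additive $m-1$ and hence the ambient dimension. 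The standard resolution (Tropp, Section~7.5) is to run the Markov argument with $\psi(a)=e^{\theta a}-\theta a-1$, which is nonnegative on all of $\real$, nondecreasing on $[0,\infty)$, and convex with $\psi(0)=0$: the zero-mean hypothesis (3) annihilates the linear term, so $\E[\trace\psi(Y)]=\E[\trace(e^{\theta Y}-I)]\leq\trace(e^{g(\theta)P}-I)$, after which your intrinsic-dimension inequality applies to $\varphi(a)=e^a-1$ at the positive semi-definite matrix $g(\theta)P$, and the denominator $e^{\theta^* t}-\theta^* t-1\geq(1-2e^{-1})e^{\theta^* t}$ for $\theta^* t\geq 1$ yields the stated constant. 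Without this substitution your sketch does not close, and the claimed ``factor of $2$ per tail'' bookkeeping is not justified as written.
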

\smallskip

Since $\E{\left[\sum_{j=1}^{n}{X_j}\right]}=0$, Theorem~\ref{t_mins} is a bound 
for the deviation of the sum from its mean, and implies that a large deviation  is unlikely
if the matrix variance has low rank. Most importantly, the bound does not depend on 
the dimension of the matrices~$X_j$.

\subsubsection{Proof of Theorem~\ref{t_2}}\label{s_t2p}
The proof is inspired by \cite[Theorem 7.8]{HI2015} and similar in part to the proof of \cite[Theorem 3]{LP14}.
Set
$$X_j\equiv z_jz_j^T \qquad \text{and} \qquad Y_j\equiv \frac{1}{n}\left(X_j-E\right), \qquad 1\leq j\leq n,$$
so that $\sum_{j=1}^n{Y_j}=\sum_{j=1}^n{X_j}-E$.
Before applying Theorem \ref{t_mins} to the sum of the $Y_j$, we need to verify 
that they satisfy the assumptions. 

\paragraph{Independence}
By assumption, the $X_j$ are independent, and so are the $Y_j$.

\paragraph{Zero mean}
Since $\E[X_j]=E$, the linearity of the expected value implies
$$\E{[Y_j]}= \tfrac{1}{{n}}\left(\E[X_j] - E\right)=\tfrac{1}{n}(E-E)=0.$$ 

\paragraph{Boundedness} 
The positive semi-definiteness and boundedness of $X_j$ and $E$ from Section~\ref{s_assum} imply
\begin{eqnarray}\label{e_p1}
\|Y_j\|_2 \leq \frac{1}{n}\,\max\left\{ \|X_j\|_2, \|E\|_2\right\} \leq \frac{1}{n}\max \{L^2,\|E\|_2 \}
\leq \beta \equiv \frac{L^2}{n}.
\end{eqnarray}

\paragraph{Matrix variance}
Multiply out, and apply the definition of $E$, 
\begin{eqnarray*}
\E{[Y_j^2]} & = & \frac{1}{n^2}\E{\left[(X_j  -  E)^2\right]}  
 =   \frac{1}{n^2}\left( \E[X_j^2] -E\,\E[X_j] - \E[X_j]\,E + E^2\right) \\
 & = & \frac{1}{n^2}\left( \E[X_j^2] - E^2 \right).
 \end{eqnarray*}
Since $E$ is positive semi-definite, we can drop it without decreasing semi-definiteness,
\begin{eqnarray}\label{e_var}
\E{[Y_j^2]} \preceq  \frac{1}{n^2} \E{[X_j^2]}.
\end{eqnarray}

\paragraph{Bounded matrix variance}  
Since $X_j$ is an outer product, 
$$X_j^2=z_j^Tz_j \, X_j = \|z_j\|_2^2\, X_j\preceq L^2 X_j,$$ 
thus $\E[X_j^2] \preceq L^2\E[X_j]= L^2E$.
This, together with (\ref{e_var}) gives $\E{\left[Y_j^2\right]} \preceq \frac{L^2}{n^2}\,E$. 
The linearity of the expected value across the $n$ identically distributed summands implies
\begin{eqnarray}\label{e_p2}
\sum_{j=1}^{n}{\E{[Y_j^2]}}\preceq \sum_{j=1}^n{\frac{L^2}{n^2} E}= P\equiv \frac{L^2}{n}\,E,
\end{eqnarray}
where $P$ is symmetric positive semi-definite since $E$ is.

From $\|P\|_2 = \tfrac{L^2}{n}\,\|E\|_2$ and $\trace{(P)}=\tfrac{L^2}{n}\trace{(E)}$ follows  
\begin{eqnarray*}\label{e_P}
\intdim{(P)} = \trace{(E)}/\|E\|_2.
\end{eqnarray*}

\paragraph{Application of Theorem \ref{t_mins}}
Substituting (\ref{e_p1}) and (\ref{e_p2}) into Theorem \ref{t_mins} gives
the probability for the absolute error
$$\Prob{\left[\|S-E\|_2\geq \hat{\epsilon} \right]} \leq 4 \>\intdim{(E)}\>
 \exp\left( - \frac{n}{L^2}\,\frac{\hat{\epsilon}^2/2}{\|E\|_2 + \hat{\epsilon}/3} \right).$$
Setting $\hat{\epsilon}=\|E\|_2\, \epsilon$ gives the probability for the relative error
$$\Prob{\left[\frac{\|S-E\|_2}{\|E\|_2}\geq \epsilon \right]} \leq 4 \>\intdim{(E)}\>
 \exp\left( - n \,\frac{\|E\|_2}{L^2}\,\frac{\epsilon^2/2}{1 + \epsilon/3} \right).$$
Setting the above right hand side equal to $\delta$ and solving for $\epsilon$ gives 
$$\epsilon = \gamma + \sqrt{\gamma\,(\gamma + 6)} \qquad
\text{where} \quad
\gamma =\frac{1}{3n}\,\frac{L^2}{\|E\|_2}\,\ln\left(\frac{4}{\delta}\, \intdim{(E)} \right).$$

\paragraph{Sufficient tolerance}
We still need to check the lower bound for $\hat{\epsilon}$ and verify that  $\hat{\epsilon} \geq \beta/3+\|P\|_2^{1/2}$,
which is equivalent to  verifying that
$$\epsilon \geq \frac{\beta}{3\|E\|_2}+\frac{\|P\|_2^{1/2}}{\|E\|_2}.$$
From $0<\delta<1$ and  $\intdim{(E)}\geq 1$ follows
$e<\tfrac{4}{\delta}\leq \tfrac{4}{\delta}\,\intdim{(E)}$, which implies
$\ln(\frac{4}{\delta}\,\intdim{(E)})\geq 1$. Together with (\ref{e_p1}) this gives
$$\frac{\beta}{3\|E\|_2}=\frac{1}{3n}\, \frac{L^2}{\|E\|_2}
\leq \frac{1}{3n}\,\frac{L^2}{\|E\|_2}\>\ln\left(\frac{4}{\delta}\, \intdim{(E)} \right)=\gamma,$$
and with (\ref{e_p2})
$$\frac{\|P\|_2^{1/2}}{\|E\|_2}=\frac{1}{\|E\|_2}\,\sqrt{\frac{L^2}{n}\|E\|_2}=
\sqrt{\frac{1}{n}\,\frac{L^2}{\|E\|_2}}
\leq \sqrt{6\gamma}\leq \sqrt{\gamma\,(\gamma + 6)}.$$
Adding the two previous inequalities gives the required lower bound 
\begin{eqnarray}\label{e_epsilon}
\epsilon \geq \gamma+ \sqrt{\gamma\,(\gamma + 6)}\geq \frac{\beta}{3\|E\|_2}+\frac{\|P\|_2^{1/2}}{\|E\|_2}.
\end{eqnarray}

\subsubsection{Expectation-based bounds}\label{s_t2expect}
Following the observation  \cite[Section 4.1]{Tropp2015} that matrix concentration bounds
may not always give satisfactory information about the tail, we consider
an alternative to the exponential Bernstein concentration inequality 
 \cite[Theorem 7.3.1]{Tropp2015} represented here by Theorem~\ref{t_mins},
and combine an expectation bound with the scalar Markov inequality.

The intrinsic dimension expectation bound \cite[Corollary 7.3.2]{Tropp2015}, 
together with the assumptions in Theorem~\ref{t_mins} and the bounds in \cite[Section 7.4.4]{Tropp2015}
implies for matrices~$P$ with $\intdim(P)\geq 2$ that 
$$\E\left[\left\|\sum_{j=1}^n{X_j}\right\|_2\right] \leq
 \frac{10}{3} \left( \sqrt{\|P\|_2\,\theta}+ \beta\,\theta\right), \qquad \theta\equiv \ln{(1+\intdim(P))}.$$ 
Combined with Markov's inequality \cite[(2.2.1)]{Tropp2015} this gives
$$\delta\equiv \Prob\left[\|\widehat{E}-E\|_2\geq \epsilon\right]\leq 
\E\left[\|\sum_{j=1}^n{X_j}\|_2\right]/\epsilon.$$
Thus, the error bound is inversely proportional to the failure probability, 
$$\epsilon=\mathcal{O}(1/\delta).$$  
In contrast, Theorem~\ref{t_2} implies the much weaker dependence,
$$\epsilon=\mathcal{O}(\ln{(1/\delta})).$$  
Since we are interested in extremely small failure probabilities, on the order of machine epsilon,
 $\delta=10^{-15}$, 
exponential concentration inequalities are preferable to expectation-based bounds in our context.

\subsection{Proof of Corollary~\ref{c_2}}\label{s_c2proof}
A lower bound on $\epsilon$ is required by the last assumption of Theorem~\ref{t_mins} 
and was established in (\ref{e_epsilon}), which is equivalent to $2\,\gamma\,(3+\epsilon)\leq \epsilon^2$.
This bound and therefore (\ref{e_epsilon})  definitely holds if 
\begin{eqnarray}\label{e_gamma}
\gamma\leq \epsilon^2/8
\end{eqnarray} 
because $\epsilon<1$. Setting $\gamma=\alpha/n$ and
$\alpha\equiv \tfrac{L^2}{3\,\|E\|_2}\,\ln\left(\frac{4}{\delta}\, \intdim{(E)}\right)$
shows that (\ref{e_gamma}) is equivalent to the desired lower bound for $n$.

\subsection{Everything for the proof of Theorem~\ref{t_3}}\label{s_t3proof}
We verify the conditions and apply a deterministic bound for the subspace angle (Section~\ref{s_structural}),
and then present the proof of Theorem~\ref{t_3} (Section~\ref{s_t3p}).

\subsubsection{Deterministic subspace angle bound}\label{s_structural}
We bound $\sin{\angle(\widehat{\mathcal{S}},\mathcal{S})}$ in terms of the absolute error $\|\widehat{E}-E\|_2$.

The keep the notation simple, partition the eigenvectors in  (\ref{e_ev1}) 
\begin{eqnarray}\label{e_part}
V = \begin{pmatrix} V_1 & V_2\end{pmatrix} \qquad \text{where}\qquad
V_1\equiv\begin{pmatrix}v_1 &\cdots & v_k\end{pmatrix}\in\real^{m\times k},
\end{eqnarray}
and conformally partition the eigenvectors in (\ref{e_ev2}), 
\begin{eqnarray*}
\widehat{V} = \begin{pmatrix} \widehat{V}_1 & \widehat{V}_2\end{pmatrix},
\qquad \text{where} \qquad
\widehat{V}_1\equiv \begin{pmatrix}\widehat{v}_1 &\cdots & \widehat{v}_k\end{pmatrix}\in\real^{m\times k}.
\end{eqnarray*}

Next is a straightforward specialization of \cite[Theorems 2.7 and 4.11]{Ste73}, \cite[Theorem V.2.7]{StS90},
and \cite[Corollary 8.1.11]{GovL2013} to real symmetric matrices and the two norm.
\smallskip

\begin{lemma}\label{l_1}  
Partition as in (\ref{e_part}), 
$$F=\begin{pmatrix}F_{11} & F_{12} \\ F_{12}^T &F_{22}\end{pmatrix}\equiv 
\begin{pmatrix}V_1 & V_2\end{pmatrix}^T\,(\widehat{E}-E)\,\begin{pmatrix}V_1 & V_2\end{pmatrix}.$$
If $\mathrm{gap}\equiv \lambda_k-\lambda_{k+1}>0$,
\begin{eqnarray}\label{e_aux1}
\eta \equiv \mathrm{gap}- \|F_{11}\|_2-\|F_{22}\|_2>0 \qquad  \text{and} \qquad
 \frac{\|F_{12}\|_2}{\mathrm{gap}}<\tfrac{1}{2}
\end{eqnarray}
then 
$$\|V_1V_1^T-\widehat{V}_1\widehat{V}_1^T\|_2\leq 2\,\|F_{12}\|_2/\eta.$$
\end{lemma}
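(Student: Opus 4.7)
The plan is to diagonalize $E$, verify that the diagonal blocks of the perturbed matrix have well-separated spectra, and then invoke one of the cited $\sin\Theta$-type theorems.

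First I would reduce to the block-diagonal setting. Since the principal angles between $\mathcal{S}$ and $\widehat{\mathcal{S}}$ and the two-norm of the projector difference $V_1V_1^T-\widehat{V}_1\widehat{V}_1^T$ are invariant under the orthogonal change of basis by $V$, it suffices to analyze
$$V^T\widehat{E}\,V=\begin{pmatrix}\Lambda_1+F_{11} & F_{12}\\ F_{12}^T & \Lambda_2+F_{22}\end{pmatrix},$$
where $\Lambda_1=\mathrm{diag}(\lambda_1,\ldots,\lambda_k)$ and $\Lambda_2=\mathrm{diag}(\lambda_{k+1},\ldots,\lambda_m)$. In this basis $\mathcal{S}$ is the span of the first $k$ standard basis vectors, and $\widehat{\mathcal{S}}$ is the corresponding dominant invariant subspace of the block matrix.

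Second, I would verify the spectral separation via Weyl's inequality. Every eigenvalue of the $(1,1)$-block $\Lambda_1+F_{11}$ lies within $\|F_{11}\|_2$ of the interval $[\lambda_k,\lambda_1]$, and every eigenvalue of the $(2,2)$-block $\Lambda_2+F_{22}$ lies within $\|F_{22}\|_2$ of $[\lambda_m,\lambda_{k+1}]$. Hence the two spectra are separated by at least
$$\eta=\mathrm{gap}-\|F_{11}\|_2-\|F_{22}\|_2>0,$$
which is precisely the first hypothesis in~(\ref{e_aux1}).

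Third, I would invoke Stewart's $\sin\Theta$-theorem from \cite[Theorems 2.7 and 4.11]{Ste73}, equivalently \cite[Theorem V.2.7]{StS90} or \cite[Corollary 8.1.11]{GovL2013}. The second hypothesis $\|F_{12}\|_2/\mathrm{gap}<1/2$ is the standard condition ensuring that the associated Riccati equation governing the invariant-subspace rotation admits a sufficiently small solution, so that $\widehat{\mathcal{S}}$ is well-defined and the sine of the largest principal angle between $\mathcal{S}$ and $\widehat{\mathcal{S}}$ is bounded by $2\|F_{12}\|_2/\eta$. Combining this with the standard identity that the two-norm of the difference of two orthogonal projectors onto equi-dimensional subspaces equals this sine yields the stated bound. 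The main obstacle is bookkeeping rather than mathematics: the cited references state the result with slightly different normalizations (some use the unperturbed gap, others the perturbed gap $\eta$; some carry the factor $2$ explicitly, others absorb it into the hypothesis on $\|F_{12}\|_2$), and the work consists in selecting the variant that produces exactly the constant $2$ and the exact hypothesis $\|F_{12}\|_2/\mathrm{gap}<1/2$ stated here.
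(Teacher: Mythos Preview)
Your proposal is correct and matches the paper's approach exactly: the paper gives no proof of Lemma~\ref{l_1} at all, stating only that it is ``a straightforward specialization of \cite[Theorems 2.7 and 4.11]{Ste73}, \cite[Theorem V.2.7]{StS90}, and \cite[Corollary 8.1.11]{GovL2013} to real symmetric matrices and the two norm.'' Your sketch---diagonalize, establish the block spectral separation $\eta$, then invoke Stewart's Riccati-based invariant subspace theorem under the smallness condition on $\|F_{12}\|_2$---is precisely how that specialization is carried out, and your remark that the only real work is reconciling the constants and hypotheses across the different cited formulations is accurate.
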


Now comes the deterministic basis for Theorem~\ref{t_3}, and it
requires the perturbation $\|\widehat{E}-E\|_2$ to be sufficiently small compared to the eigenvalue gap.
The conclusions are spelled out in more detail than
usual to ensure a correct interface with the matrix concentration bounds for Section~\ref{s_t3p}.  
\smallskip

\begin{theorem}\label{t_1}
If for some $1\leq k<m$, the matrix $E$ has an eigenvalue gap
$\lambda_{k} - \lambda_{k+1} > 0$ and $\|\widehat{E}-E\|_2 < (\lambda_{k}-\lambda_{k+1})/4$, then
\begin{compactenum}
\item $\widehat{E}$ has an eigenvalue gap at the same location as $E$, that is,
$\widehat{\lambda}_k-\widehat{\lambda}_{k+1}>0$.
\item The dominant subspaces $\mathcal{S}=\range{(V_1})$ 
and $\widehat{\mathcal{S}}=\range{(\widehat{V}_1)}$  are well-defined.
\item The largest principal angle $\angle(\widehat{\mathcal{S}},\mathcal{S})$ is bounded by
$$\sin{\angle(\widehat{\mathcal{S}},\mathcal{S})} \leq 4\,\frac{\|\widehat{E}-E\|_2}{\lambda_k-\lambda_{k+1}}.$$
\end{compactenum}
\end{theorem}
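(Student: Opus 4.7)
The proof naturally decomposes into two deterministic ingredients: a Weyl-type eigenvalue perturbation argument (for parts 1 and 2) and the block-perturbation Lemma~\ref{l_1} (for part 3). Throughout, let $\mathrm{gap} \equiv \lambda_k - \lambda_{k+1}$ and write $\tau \equiv \|\widehat{E}-E\|_2$, so by hypothesis $\tau < \mathrm{gap}/4$.

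For parts 1 and 2, my plan is to invoke Weyl's inequality for symmetric matrices, which gives $|\widehat{\lambda}_j - \lambda_j| \leq \tau$ for every $j$. Hence $\widehat{\lambda}_k \geq \lambda_k - \tau$ and $\widehat{\lambda}_{k+1} \leq \lambda_{k+1} + \tau$, so
\begin{equation*}
\widehat{\lambda}_k - \widehat{\lambda}_{k+1} \;\geq\; \mathrm{gap} - 2\tau \;>\; \mathrm{gap}/2 \;>\; 0.
\end{equation*}
This strict inequality ensures that $\widehat{E}$ has an eigenvalue gap at the same index $k$, and therefore both $\mathcal{S}=\range(V_1)$ and $\widehat{\mathcal{S}}=\range(\widehat{V}_1)$ are unambiguously defined.

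For part 3, I would apply Lemma~\ref{l_1} with $F \equiv V^T(\widehat{E}-E)V$. Since $V$ is orthogonal, each block of $F$ inherits the norm bound $\max\{\|F_{11}\|_2,\|F_{12}\|_2,\|F_{22}\|_2\} \leq \tau$. This immediately yields the two hypotheses of Lemma~\ref{l_1}:
\begin{equation*}
\eta \;=\; \mathrm{gap} - \|F_{11}\|_2 - \|F_{22}\|_2 \;\geq\; \mathrm{gap} - 2\tau \;>\; \mathrm{gap}/2 \;>\; 0,
\qquad
\frac{\|F_{12}\|_2}{\mathrm{gap}} \;\leq\; \frac{\tau}{\mathrm{gap}} \;<\; \frac14 \;<\; \frac12.
\end{equation*}
Lemma~\ref{l_1} then gives $\|V_1V_1^T - \widehat{V}_1\widehat{V}_1^T\|_2 \leq 2\|F_{12}\|_2/\eta \leq 2\tau/(\mathrm{gap}/2) = 4\tau/\mathrm{gap}$. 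Finally, the standard identity $\sin \angle(\widehat{\mathcal{S}},\mathcal{S}) = \|V_1V_1^T - \widehat{V}_1\widehat{V}_1^T\|_2$ relating the largest principal angle to the two-norm of the difference of orthogonal projectors onto equi-dimensional subspaces converts the projector bound into the asserted sine bound.

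The pieces here are essentially standard, so there is no deep obstacle; the only care needed is in checking the hypotheses of Lemma~\ref{l_1} numerically. The assumption $\tau < \mathrm{gap}/4$ is exactly the slack required to drive both $\eta > \mathrm{gap}/2$ and $\|F_{12}\|_2/\mathrm{gap} < 1/2$ simultaneously, which in turn is what produces the clean constant $4$ in the final bound; a weaker assumption like $\tau < \mathrm{gap}/2$ would not give $\eta$ a uniform lower bound bounded away from zero.
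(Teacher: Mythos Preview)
Your proof is correct and follows essentially the same route as the paper: the paper also uses the eigenvalue perturbation bound $|\widehat{\lambda}_j-\lambda_j|\le\tau$ (which it attributes to the Cauchy interlace theorem rather than Weyl, but the content is identical) for parts~1 and~2, and then verifies the two hypotheses of Lemma~\ref{l_1} exactly as you do to obtain part~3. The only cosmetic difference is that the paper writes $\mathrm{gap}-2\tau>4\tau-2\tau=2\tau\ge0$ rather than $\mathrm{gap}-2\tau>\mathrm{gap}/2>0$, but both are immediate consequences of $\tau<\mathrm{gap}/4$.
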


\begin{proof}
With the abbreviations $\tau\equiv \|\widehat{E}-E\|_2$ and $\mathrm{gap}\equiv \lambda_k-\lambda_{k+1}>0$,
the all important assumption takes the form
\begin{eqnarray}\label{e_aux4}
\|F\|_2=\tau < \mathrm{gap}/4.
\end{eqnarray}
The three statements will now be proved in the order listed. 

\begin{compactenum}
\item To show that $\widehat{\lambda}_k-\widehat{\lambda}_{k+1}>0$, invoke
the Cauchy interlace Theorem \cite[Section 10-1]{Par80}, 
$\max_{1\leq j\leq m}{|\lambda_j-\widehat{\lambda}_j|}\leq \tau$, which implies in particular
$\widehat{\lambda}_k\geq \lambda_k-\tau$ and $\widehat{\lambda}_{k+1}\leq \lambda_{k+1}+\tau$.
Together with (\ref{e_aux4}) this gives
$$\widehat{\lambda}_k-\widehat{\lambda}_{k+1}\geq (\lambda_k-\tau)-(\lambda_{k+1}+\tau)
=\mathrm{gap}-2\tau> 4\tau-2\tau=2\tau\geq 0.$$

\item The existence of the eigenvalue gaps $\lambda_k-\lambda_{k+1}>0$ and
$\widehat{\lambda}_k-\widehat{\lambda}_{k+1}>0$ implies that $\mathcal{S}$ and 
$\widehat{\mathcal{S}}$ are simple invariant subspaces \cite[Section V.1]{StS90}.

Since the columns of $V_1,\widehat{V}_1\in\real^{m\times k}$ are orthonormal bases for  
$\mathcal{S}=\range(V_1)$ and $\widehat{\mathcal{S}}=\range(\widehat{V}_1)$, respectively, 
the matrices $V_1V_1^T$ and $\widehat{V}_1\widehat{V}_1^T$ are orthogonal 
projectors onto $\mathcal{S}$ and $\widehat{\mathcal{S}}$, respectively.

\item The two norm difference between orthogonal  projectors onto equi-dimensional spaces
is the sine of the largest principal angle 
 \cite[Sections 2.5.3, 6.4.3]{GovL2013}, \cite[Corollary 2.6]{Ste73}, 
\begin{eqnarray}\label{e_aux3}
\sin{\angle(\widehat{\mathcal{S}},\mathcal{S})} =\|V_1V_1^T - \widehat{V}_1\widehat{V}_1^T\|_2.
\end{eqnarray}
To bound this difference in terms of $\tau$,  we apply Lemma~\ref{l_1}, but need to verify first
that its conditions (\ref{e_aux1}) hold.

The first condition in (\ref{e_aux1}) follows from (\ref{e_aux4}) and
\begin{eqnarray*}
\eta &=& \mathrm{gap}- \|F_{11}\|_2-\|F_{22}\|_2 \geq \mathrm{gap}-2\|F\|_2
\geq \mathrm{gap}-\tfrac{1}{2}\mathrm{gap}=\tfrac{1}{2}\mathrm{gap}>0.
\end{eqnarray*}
The second condition in (\ref{e_aux1}) also follows from (\ref{e_aux4}) and
$$\frac{\|F_{12}\|_2}{\mathrm{gap}}\leq \frac{\|F\|_2}{\mathrm{gap}} <\tfrac{1}{4}<\tfrac{1}{2}.$$
The desired bound follows from combining (\ref{e_aux3}), Lemma~\ref{l_1} and $\eta\geq \tfrac{1}{2}\mathrm{gap}$.
\end{compactenum}
\end{proof}
\smallskip

Theorem~\ref{t_1} implies that the subspace $\mathcal{S}$ is well-conditioned if the eigenvalue gap
$\lambda_k-\lambda_{k+1}$ is large compared to the matrix perturbation $\|\widehat{E}-E\|_2$.

\subsubsection{Proof of Theorem~\ref{t_3}}\label{s_t3p}
We combine the probabilistic bound in Corollary~\ref{c_2} and the deterministic bound in Theorem~\ref{t_1}. 

Corollary~\ref{c_2} implies: If
\begin{eqnarray}\label{e_nb}
n \geq \frac{8}{3\,\epsilon^2}\,\frac{L^2}{\|E\|_2} \ln\left( \frac{4}{\delta}\, \intdim{(E)}\right),
\end{eqnarray}
then with probability at least $1-\delta$ we have
$\|\widehat{E}-E\|_2\leq \|E\|_2 \,\epsilon$.
Theorem~\ref{t_3} guarantees, by assumption, that $\|E\|_2\,\epsilon< (\lambda_k-\lambda_{k+1})/4$.

Combining the two gives: If (\ref{e_nb}) holds, then with probability at least $1-\delta$ we have
$\|\widehat{E}-E\|_2\leq (\lambda_k-\lambda_{k+1})/4$.
This in turn means: If (\ref{e_nb}) holds then with probability at least $1-\delta$ the assumptions
for Theorem \ref{t_1} are satisfied, and its conclusions hold.  
\smallskip

\begin{remark}
We do not see how to transfer the eigenvalue gap $\lambda_k-\lambda_{k+1}$ from the angle bound
into the number of samples. 

This is  because the deterministic bound in Theorem~\ref{t_1} and the probabilistic bound in Corollary~\ref{c_2}
make competing demands. The former requires an upper bound on the matrix perturbation $\|\widehat{E}-E\|_2$,
while the latter requires a lower bound.
\end{remark}

\subsection{Proof of Theorem~\ref{t_4}}\label{s_t4proof}
Once the required bound for $E$ has been established below, the proof of Theorem~\ref{t_4} is
is a direct consequence of Theorem~\ref{t_3}.
\smallskip

\begin{lemma}\label{l_ebound}
The matrix $E$ in Theorem~\ref{t_4} satisfies $\|E\|_2\leq L^2$. 
\end{lemma}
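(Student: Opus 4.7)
The plan is to exploit the fact that $E$ is symmetric positive semi-definite, so its two-norm equals its largest eigenvalue, which can be characterized by the Rayleigh quotient: $\|E\|_2 = \max_{\|v\|_2=1} v^T E v$. The strategy is to bound $v^T E v$ for an arbitrary unit vector $v$ using the uniform gradient bound.

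First I would pull the quadratic form inside the expectation using linearity:
$$v^T E v = v^T \E[\nabla f(\bfX)\,(\nabla f(\bfX))^T] v = \E\bigl[(v^T \nabla f(\bfX))^2\bigr].$$
Then I would apply Cauchy--Schwarz to the integrand, $(v^T \nabla f(\bfX))^2 \leq \|v\|_2^2 \,\|\nabla f(\bfX)\|_2^2 = \|\nabla f(\bfX)\|_2^2$, and invoke the Lipschitz hypothesis $\|\nabla f(x)\|_2 \leq L$ from Section~\ref{s_setting} to conclude $(v^T \nabla f(\bfX))^2 \leq L^2$ pointwise. Taking the expectation (which is bounded by $L^2$ since $\rho$ is a probability density) and maximizing over unit $v$ gives $\|E\|_2 \leq L^2$.

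There is essentially no obstacle here; the only thing to be careful about is justifying the interchange of expectation and the quadratic form, which is immediate from the integrability assumptions on the products of partial derivatives stated in Section~\ref{s_setting}. With this lemma in hand, all assumptions in Section~\ref{s_assum} are met for the random vectors $z_j \equiv \nabla f(\bfx_j)$ (boundedness $\|z_j\|_2 \leq L$ and $\|E\|_2 \leq L^2$), so Theorem~\ref{t_3} applies verbatim to yield Theorem~\ref{t_4}.
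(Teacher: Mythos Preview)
Your argument is correct. The paper's proof takes a slightly different but equally elementary route: it applies the triangle inequality for integrals directly to the norm,
\[
\|E\|_2 = \left\|\int \nabla f(\bfx)\,(\nabla f(\bfx))^T\rho(\bfx)\,d\bfx\right\|_2
\leq \int \|\nabla f(\bfx)\|_2^2\,\rho(\bfx)\,d\bfx
\leq L^2 \int \rho(\bfx)\,d\bfx = L^2,
\]
rather than going through the Rayleigh quotient. Both approaches ultimately rest on the same two facts---the pointwise bound $\|\nabla f(\bfx)\|_2\leq L$ and $\int\rho=1$---so neither offers a real advantage over the other; your version makes the role of positive semi-definiteness a bit more explicit, while the paper's avoids mentioning Cauchy--Schwarz.
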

\smallskip

\begin{proof}
This follows from the assumptions in Section~\ref{s_as} and the fact that $\rho$ is a probability density function,
\begin{eqnarray*}
\|E\|_2 &=& \left\| \int{\nabla f(\bfx) (\nabla f(\bfx))^T  \rho(\bfx)\, d\bfx}\right\|_2 
\leq  \int{\|\nabla f(\bfx)(\nabla f(\bfx))^T\|_2\,\rho(\bfx)\, d\bfx} \\
&\leq & \max_{x}{\{\|\nabla f(\bfx)\|_2^2\}} \, \int\rho(\bfx)\,d\bfx \leq  L^2.
\end{eqnarray*}
\end{proof}

Thus we can interpret 
\begin{eqnarray}\label{e_ebound}
L^2/\|E\|_2\geq 1
\end{eqnarray} 
as a measure for the smoothness of $f$.

\section{Acknowledgements}
We thank Haim Avron, David Bindel, Serkan Gugercin, Mert Gurbuzbalaban, Tim Kelley, 
and Jim Nagy for helpful discussions.

\bibliography{AS}

\begin{thebibliography}{10}

\bibitem{AMS2008}
{\sc P.-A. Absil, R.~Mahony, and R.~Sepulchre}, {\em Optimization algorithms on
  matrix manifolds}, Princeton University Press, Princeton, NJ, 2008.

\bibitem{AKMMVZ17}
{\sc H.~Avron, M.~Kapralov, C.~Musco, C.~Musco, A.~Velingker, and A.~Zandieh},
  {\em Random {Fourier} features for kernel ridge regression: {Approximation}
  bounds and statistical guarantees}, in Proceedings of the 34th International
  Conference on Machine Learning (ICML), vol.~70, PMLR, 2017.

\bibitem{BAHH2012}
{\sc Y.~Bang, H.~S. Abdel-Khalik, and J.~M. Hite}, {\em Hybrid reduced order
  modeling applied to nonlinear models}, Internat. J. Numer. Methods Engrg., 91
  (2012), pp.~929--949.

\bibitem{BNCIMN2014}
{\sc M.~Bauerheim, A.~Ndiaye, P.~Constantine, S.~Moreau, and F.~Nicoud}, {\em
  Symmetry breaking of azimuthal thermoacoustic modes: {The} {UQ} perspective},
  J. Fluid Mech.,  (2015).

\bibitem{CWHC2011}
{\sc H.~Chen, W.~Q., R.~Hu, and C.~P. G.}, {\em Conditional sampling and
  experiment design for quantifying manufacturing error of transonic airfoil},
  in 49th AIAA Aerospace Sciences Meeting, 2011.

\bibitem{CDWI2011}
{\sc P.~G. Constantine, A.~Doostan, Q.~Wang, and G.~Iaccarino}, {\em A
  surrogate accelerated {B}ayesian inverse analysis of the {H}y{S}hot {II}
  flight data}, in Proceedings of the 52nd AIAA/ASME/ASCE/AHS/ASC Structures,
  Structural Dynamics and Material Conference, 2011.

\bibitem{CDW2014}
{\sc P.~G. Constantine, E.~Dow, and Q.~Wang}, {\em Active subspace methods in
  theory and practice: {Applications} to {Kriging} surfaces}, SIAM J. Sci.
  Comput., 36 (2014), pp.~A1500--A1524.

\bibitem{erratum}
\leavevmode\vrule height 2pt depth -1.6pt width 23pt, {\em Erratum: {A}ctive
  subspace methods in theory and practice: {Applications} to {Kriging}
  surfaces}, SIAM J. Sci. Comput., 36 (2014), pp.~A3030--A3031.

\bibitem{CG2014}
{\sc P.~G. Constantine and D.~Gleich}, {\em Computing active subspaces with
  {Monte Carlo}}, 2015.
\newblock arXiv:1408.0545v2.

\bibitem{CWI2012}
{\sc P.~G. Constantine, Q.~Wang, and G.~Iaccarino}, {\em A method for spatial
  sensitivity analysis}, tech. rep., Center for Turbulence Research, Stanford
  University, 2012.

\bibitem{CZC2014}
{\sc P.~G. Constantine, B.~Zaharatos, and M.~Campanelli}, {\em Discovering an
  active subspace in a single-diode solar cell model}, Stat. Anal. Data Min., 8
  (2015), pp.~264--273.

\bibitem{DMKI16}
{\sc P.~Drineas, I.~C.~F. Ipsen, M.~{Magdon-Ismail}, E.-M. Kontopoulo, and
  M.~Magdon-Ismail}, {\em Structural convergence results for low-rank
  approximations from block {Krylov} spaces}, Submitted,  (2016).

\bibitem{GovL2013}
{\sc G.~H. Golub and C.~F. {Van Loan}}, {\em Matrix Computations}, Johns
  Hopkins University Press, Baltimore, fourth~ed., 2013.

\bibitem{HMT09}
{\sc N.~Halko, P.~G. Martinsson, and J.~A. Tropp}, {\em Finding structure with
  randomness: Probabilistic algorithms for constructing approximate matrix
  decompositions}, SIAM Rev., 53 (2011), pp.~217--288.

\bibitem{HI2015}
{\sc J.~T. Holodnak and I.~C.~F. Ipsen}, {\em Randomized {A}pproximation of the
  {G}ram {M}atrix: {E}xact {C}omputation and {P}robabilistic {B}ounds}, SIAM J.
  Matrix Anal. Appl., 36 (2015), pp.~110--137.

\bibitem{HJ2013}
{\sc R.~A. Horn and C.~R. Johnson}, {\em Matrix Analysis}, Cambridge University
  Press, Cambridge, second~ed., 2013.

\bibitem{LP14}
{\sc D.~{Lopez-Paz}, S.~Sra, A.~Smola, Z.~Ghahramani, and {Sch\"{olkopf}, B.}},
  {\em Randomized nonlinear component analysis}, in Proceedings of the 31st
  International Conference on Machine Learning (ICML), vol.~32, PMLR, 2014,
  pp.~1359--1367.

\bibitem{MIns11}
{\sc S.~Minsker}, {\em On some extensions of {Bernstein's} inequality for
  self-adjoint operators}.
\newblock arXiv:1112.5448v1, 2011.

\bibitem{Mins17}
\leavevmode\vrule height 2pt depth -1.6pt width 23pt, {\em On some extensions
  of {Bernstein's} inequality for self-adjoint operators}.
\newblock arXiv:1112.5448v3, 2017.

\bibitem{NSO2015}
{\sc N.~Namura, K.~Shimoyama, and S.~Obayashi}, {\em Kriging surrogate model
  enhanced by coordinate transformation of design space based on eigenvalue
  decomposition}, in Evolutionary Multi-Criterion Optimization, Lecture Notes
  in Comput. Sci., Springer, Cham, 2015.

\bibitem{Par80}
{\sc B.~N. Parlett}, {\em The Symmetric Eigenvalue Problem}, Prentice Hall,
  Englewood Cliffs, 1980.

\bibitem{Russi2010}
{\sc T.~M. Russi}, {\em Uncertainty Quantification with experimental data and
  complex system models}, PhD thesis, University of California, Berkeley, 2010.

\bibitem{Smith2014}
{\sc R.~C. Smith}, {\em Uncertainty Quantification: {T}heory, Implementation,
  and Applications}, SIAM, Philadelphia, PA, 2014.

\bibitem{Sobol2009}
{\sc I.~M. Sobol' and S.~Kucherenko}, {\em Derivative based global sensitivity
  measures and their link with global sensitivity indices}, Math. Comput.
  Simulation, 79 (2009), pp.~3009--3017.

\bibitem{Ste73}
{\sc G.~W. Stewart}, {\em Error and perturbation bounds for subspaces
  associated with certain eigenvalue problems}, SIAM Rev., 15 (1973),
  pp.~727--64.

\bibitem{StS90}
{\sc G.~W. Stewart and J.~Sun}, {\em Matrix Perturbation Theory}, Academic
  Press, San Diego, 1990.

\bibitem{SW2014}
{\sc M.~Stoyanov and C.~G. Webster}, {\em A gradient-based sampling approach
  for dimension reduction of partial differential equations with stochastic
  coefficients}, Int. J. Uncertainty Quantification,  (2014).

\bibitem{Tropp2015}
{\sc J.~A. Tropp}, {\em An introduction to matrix concentration inequalities},
  Found. Trends Mach. Learning, 8 (2015), pp.~1--230.

\end{thebibliography}

\end{document}